\numberwithin{equation}{section}
\newtheorem{thm}{Theorem}
\newtheorem{cor}[thm]{Corollary}
\newtheorem{prop}[thm]{Proposition}
\newtheorem{rem}{Remark}
\newcommand{\ve}{\varepsilon}
\newcommand{\rk}{\mbox{\rm rank}}
\newcommand{\cov}{\mbox{\rm cov}}
\newcommand{\mm}{\mbox{MMSE}}
\newcommand{\GR}{\mbox{GR}}
\newcommand{\dif}{{\rm d_1}}
\newcommand{\diff}{{\rm d_2}}
\newcommand{\gme}{{GM}}
\newcommand{\ols}{{OLS}}
\title{
Covariance structure associated with an equality between two general ridge estimators
}
\author{Koji Tsukuda\footnote{Graduate School of Arts and Sciences, the University of Tokyo, 3-8-1 Komaba, Meguro-ku, Tokyo 153-8902, Japan. mail: ctsukuda@g.ecc.u-tokyo.ac.jp}; Hiroshi Kurata\footnote{Graduate School of Arts and Sciences, the University of Tokyo, 3-8-1 Komaba, Meguro-ku, Tokyo 153-8902, Japan. mail: kurata@waka.c.u-tokyo.ac.j}}
\begin{document}
\maketitle

\begin{abstract}
In a general linear model, this paper derives a necessary and sufficient condition under which two general ridge estimators coincide with each other.
The condition is given as a structure of the dispersion matrix of the error term.
Since the class of estimators considered here contains linear unbiased estimators such as the ordinary least squares estimator and the best linear unbiased estimator, our result can be viewed as a generalization of the well-known theorems on the equality between these two estimators, which have been fully studied in the literature.
Two related problems are also considered: equality between two residual sums of squares, and classification of dispersion matrices by a perturbation approach.
\\

\noindent MSC-2010. primary:62J05. secondary: 62F10, 62J07.  \\
key words and phrases: Best linear unbiased estimator; General linear model; Least squares estimator; Perturbation approach
\end{abstract}

\section{Introduction}\label{sec1}
In a general linear model, this paper derives a necessary and sufficient condition under which 
two general ridge estimators coincide with each other.
To state the problem more precisely, let us consider
\begin{equation}
y = X \beta + \ve, \quad {\sf E}[\ve]=0,  \quad {\sf E}[\ve \ve^\top ]=\sigma^2 \Omega, \label{GLM}
\end{equation}
where $y$ is an $n\times1$ vector, $X$ is an $n\times k$ matrix ($n>k$) satisfying $\rk (X)=k$, $\sigma^2$ is an unknown positive constant and $\Omega$ is a known positive definite matrix.
As is well-known,  the estimator of the form 
\[
\tilde\beta_\gme=(X^\top\Omega^{-1} X)^{-1} X^\top \Omega^{-1} y,
\]
which will be called the Gauss--Markov estimator in the sequel, is the best linear unbiased estimator of $\beta$, that is, 
it has the smallest covariance matrix  (in terms of positive semidefiniteness) among linear unbiased estimators.
This estimator is also optimal with respect to the following  quadratic risk functions:
\[
R(\tilde\beta,\beta)={\sf E}\left[ (\tilde\beta-\beta)^\top W(\tilde\beta-\beta)\right] ,
\]
where $W$ is an arbitrary positive semidefinite matrix. 
However, if we broaden the class of estimators to that of linear but not necessarily unbiased estimators, it is no longer optimal
and general ridge estimators play an essential role instead.
Here, a general ridge estimator is defined to be an estimator of the form
\begin{equation}
\hat\beta(\Psi,K) = (X^\top \Psi^{-1}X + K)^{-1} X^\top \Psi^{-1}y \quad {\rm with}\  \Psi \in \mathcal{S}^+(n) \ {\rm and} \ K\in \mathcal{S}^N(k)\label{GRE}
\end{equation}
(\cite{RefR76}), where $\mathcal{S}^+(m)$ and $\mathcal{S}^N(m)$ denote the sets of $m\times m$ positive definite and semidefinite matrices, respectively. 
As is proved by  \cite{RefR76} and \cite{RefM}, the general ridge estimators are linearly sufficient and linearly admissible, and conversely, any linearly sufficient and linearly admissible estimator belongs to the class of general ridge estimators.
Moreover, they are linearly complete.
For other properties of general ridge estimator, see, for example, \cite{RefAS}, \cite{RefG} and \cite{RefGM}.

On the other hand, it is also well-known that there are some cases in which two linear unbiased estimators coincide with each other.
Perhaps most important is the one in which the Gauss-Markov estimator $\tilde\beta_\gme$ is identically equal to the  ordinary least squares estimator  $\tilde\beta_\ols=(X^\top X)^{-1}X^\top y$, which does not depend on $\Omega$.
Conditions for the equality between  the two estimators have been studied by many authors so far (see, for example, \cite{RefBT}, Chapter 7 of \cite{RefKK}, \cite{RefPS} and \cite{RefZ}).
Among others, \cite{RefR67} proved that for a given $X$, the equality $\tilde\beta_\gme=\tilde\beta_\ols$ holds for all $y$ if and only if $\Omega$ is of the form
\begin{equation}
\Omega = X\Gamma X^\top  + Z\Delta Z^\top \quad {\rm for\ some\ }\Gamma\in \mathcal{S}^+(k)
{\rm \ and\ } \Delta \in \mathcal{S}^+(n-k),
 \label{RCS}
\end{equation}
where  $Z$ is an $n\times(n-k)$ matrix satisfying $X^\top Z=0$ and $\rk(Z)=n-k$, and will be fixed throughout. 
In this paper, we discuss an identical equality between two general ridge estimators.
More precisely, we derive a necessary and sufficient condition for $\Omega$ to guarantee that, for given $K_1, K_2\in \mathcal{S}^N(n)$, the equality 
\[
\hat\beta(\Omega,K_1)=\hat\beta(I,K_2)\ \ {\rm for\ any\ }y\in \Bbb{R}^n
\]
holds. 
This result, which will be presented in Section 2, can be regarded as an extension of \eqref{RCS}, since the class of general ridge estimators includes the Gauss--Markov and the ordinary least squares estimators.
Indeed we can readily see
\[ \tilde\beta_\gme=\hat\beta(\Omega,0) \quad {\rm and} \quad \tilde\beta_\ols=\hat\beta(I,0). \]
The class also contains the ordinary ridge estimators $\hat\beta(I,\lambda I)=(X^\top X+\lambda I)^{-1}X^\top y$ and $\hat\beta(\Omega,\lambda I)= (X^\top \Omega^{-1}X+\lambda I)^{-1}X^\top\Omega^{-1} y$ with $\lambda>0$ and shrinkage estimators of the form $\hat\beta(I, \rho X^\top  X)=\rho\tilde\beta_\ols$ and $\hat\beta(\Omega,\rho X^\top \Omega^{-1}X)=\rho\tilde\beta_\gme$ with $\rho>0$.

In Sections 3 and 4,  two related problems are considered:
First one is the problem of deriving 
a condition on $\Omega$ under which an identical equality between two generalized residual sums of squares holds.
To state it precisely, let
\begin{equation}
\GR (\Psi,K) = (y-X \hat\beta(\Psi,K))^\top  \Psi^{-1} (y-X \hat\beta(\Psi,K)) \quad \left( \Psi \in \mathcal{S}^+(n); \ K\in \mathcal{S}^N(k) \right).  \label{GRSS} 
\end{equation}
Then the ordinary residual sums of squares and its Gauss--Markov version are
given respectively by 
\[ \GR(I,0) = (y-X \hat\beta(I,0))^\top (y-X \hat\beta(I,0))\]
and
\[ \GR(\Omega,0) = (y-X \hat\beta(\Omega,0))^\top\Omega^{-1} (y-X \hat\beta(\Omega,0)). \]
In the literature, \cite{RefKa} derived a necessary and sufficient condition under which $\GR(\Omega,0)=\GR(I,0)$ in the context of estimation of $\sigma^2$. 
He also derived a condition for the two equalities $\hat\beta(\Omega,0)=\hat\beta(I,0)$ and $\GR(\Omega,0)=\GR(I,0)$ to hold  simultaneously.
The latter result was generalized by \cite{RefK}. 
See also \cite{RefG0}.
In this paper, we generalize their result by considering the case in which 
\[ \hat\beta(\Omega,K_1) = \hat\beta(I,K_2) \quad  {\rm and} \quad \GR(\Omega,K_1) = \GR(I,K_2) \]
hold for given $K_1$ and $K_2$. 
Needless to say, the above equalities do not generally hold.
Moreover, \cite{RefK} used 
\[ \rk \left( \cov(\hat\beta(I,0) - \hat\beta(\Omega,0)) \right)  =\rk \left( X^\top \Omega Z \right) \]
to measure the extent to which  $\Omega$ deviates from \eqref{RCS}.
In Section 4, we extend his result to the case including general ridge estimators.
 
As has been widely recognized, the simple ordinary ridge estimator $\hat\beta(I,\lambda I)$ shows better performance in practice than the ordinary least squares estimator $\hat\beta(I,0)$ when there exists a multicollinearity in the explanatory variables (\cite{RefHK}).
Moreover, some previous works such as \cite{RefFF} have reported that $\hat\beta(I,\lambda I)$ works well in many cases.
Hence, it is valuable to discuss the case $K\neq 0$ also from the practical viewpoint.

\section{Equality between two general ridge estimators}\label{sec2}

In this section, we derive a necessary and sufficient condition for the dispersion matrix $\Omega$ to guarantee 
an identical equality between two general ridge estimators.
We use the fact that the condition \eqref{RCS} is equivalent to $X^\top \Omega^{-1}Z=0$.

\begin{thm}\label{GRC}
For $K_1, K_2\in S^N (k)$, the equality $\hat\beta(\Omega , K_1) = \hat\beta(I , K_2)$ holds if and only if  the dispersion matrix $\Omega$ is of the form \eqref{RCS} with some $\Gamma \in \mathcal{S}^+(k)$ and $\Delta \in \mathcal{S}^+(n-k)$ satisfying
\begin{equation}
X^\top X\Gamma K_1 =  K_2 \label{t1j}. 
\end{equation}
\end{thm}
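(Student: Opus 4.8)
The plan is to turn the statement ``$\hat\beta(\Omega,K_1)=\hat\beta(I,K_2)$ for every $y\in\mathbb{R}^n$'' into two matrix equations and analyse them one at a time. Both estimators are linear in $y$, and $\rk(X)=k$ makes $X^\top\Omega^{-1}X+K_1$ and $X^\top X+K_2$ nonsingular (each being a positive definite matrix plus a positive semidefinite one), so the equality holds for all $y$ if and only if
\[
(X^\top\Omega^{-1}X+K_1)^{-1}X^\top\Omega^{-1}=(X^\top X+K_2)^{-1}X^\top
\]
as $k\times n$ matrices. I would post-multiply this by the nonsingular $n\times n$ matrix $[X\ Z]$ (nonsingular since $X^\top Z=0$ and both blocks have full column rank), which separates it into
\[
(X^\top\Omega^{-1}X+K_1)^{-1}X^\top\Omega^{-1}X=(X^\top X+K_2)^{-1}X^\top X
\]
together with $X^\top\Omega^{-1}Z=0$, the latter after using $X^\top Z=0$ on the right-hand side and cancelling the nonsingular leading factor on the left.

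By the fact quoted just before the theorem, $X^\top\Omega^{-1}Z=0$ is equivalent to $\Omega$ being of the form \eqref{RCS}. So it remains to show that, under \eqref{RCS}, the displayed matrix equation is equivalent to \eqref{t1j}. The computational key is the identity $\Omega^{-1}X=X(X^\top X)^{-1}\Gamma^{-1}$, which is immediate on multiplying by $\Omega=X\Gamma X^\top+Z\Delta Z^\top$ and using $X^\top Z=0$; it gives $X^\top\Omega^{-1}X=\Gamma^{-1}$, and one sees that $\Delta$ never enters.

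Substituting, the equation becomes $(\Gamma^{-1}+K_1)^{-1}\Gamma^{-1}=(X^\top X+K_2)^{-1}X^\top X$. I would rewrite each side via the elementary identity $(P^{-1}+K)^{-1}P^{-1}=(I+PK)^{-1}$ --- valid for $P$ positive definite and $K$ positive semidefinite, since $I+PK$ is similar to $I+P^{1/2}KP^{1/2}\succeq I$ and hence nonsingular --- taking $P=\Gamma$ on the left and $P=(X^\top X)^{-1}$ on the right. This reduces the equation to $(I+\Gamma K_1)^{-1}=(I+(X^\top X)^{-1}K_2)^{-1}$, hence to $\Gamma K_1=(X^\top X)^{-1}K_2$, i.e.\ $X^\top X\Gamma K_1=K_2$, which is \eqref{t1j}. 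Reading the chain in both directions gives both implications of the theorem.

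I do not anticipate a genuine obstacle: the argument is essentially bookkeeping, the only points of care being the several nonsingularity assertions (all immediate from positive (semi)definiteness) and keeping the ``$Z$-direction'' (which yields \eqref{RCS}) cleanly apart from the ``$X$-direction'' (which yields \eqref{t1j}). The step most worth isolating as a named observation is $X^\top\Omega^{-1}X=\Gamma^{-1}$ under \eqref{RCS}, since it is what collapses the remaining problem to a one-line matrix manipulation.
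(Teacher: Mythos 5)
Your proposal is correct and follows essentially the same route as the paper: split the matrix identity along the nonsingular block matrix $(X,Z)$ into the $Z$-direction (giving $X^\top\Omega^{-1}Z=0$, i.e.\ \eqref{RCS}) and the $X$-direction, then use $X^\top\Omega^{-1}X=\Gamma^{-1}$ to reduce the latter to $\Gamma K_1=(X^\top X)^{-1}K_2$. The only differences are cosmetic (the paper clears the inverse $(X^\top\Omega^{-1}X+K_1)^{-1}$ before splitting and substitutes the explicit formula \eqref{Oinv} for $\Omega^{-1}$, whereas you derive $X^\top\Omega^{-1}X=\Gamma^{-1}$ directly and invoke the identity $(P^{-1}+K)^{-1}P^{-1}=(I+PK)^{-1}$), so no further comment is needed.
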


\begin{proof}
The equality $\hat\beta(\Omega , K_1) = \hat\beta(I , K_2)$ can be rewritten as 
\[ X^\top  \Omega^{-1} = (X^\top  \Omega^{-1} X + K_1)(X^\top X + K_2)^{-1} X^\top , \]
which is further equivalent to the following two equalities:
\begin{equation}
X^\top  \Omega^{-1} Z = 0  \label{p1g1}
\end{equation}
and
\begin{equation}
X^\top  \Omega^{-1} X = (X^\top  \Omega^{-1} X + K_1)(X^\top X + K_2)^{-1} X^\top  X, \label{p2g2}
\end{equation}
since $X^\top Z=0$ and the matrix $(X,Z)$ is nonsingular.
As is remarked in Section 1, the condition \eqref{p1g1} is equivalent to \eqref{RCS},
which can also be expressed as
\begin{equation} \Omega^{-1} = X (X^\top  X)^{-1} \Gamma^{-1} (X^\top  X)^{-1} X^\top  +  Z (Z^\top  Z)^{-1} \Delta^{-1} (Z^\top  Z)^{-1} Z^\top . \label{Oinv}
\end{equation}
Substituting it to \eqref{p2g2} shows that with \eqref{p1g1}, the condition \eqref{p2g2} is equivalent to
\begin{eqnarray*}
&& \Gamma^{-1} = (\Gamma^{-1} + K_1 )(X^\top X + K_2 )^{-1} X^\top  X \\
&\Leftrightarrow& I = (I + \Gamma K_1 )(X^\top X + K_2 )^{-1} X^\top  X \\
&\Leftrightarrow& (I + \Gamma K_1)^{-1} = (X^\top X + K_2)^{-1} X^\top  X \\
&\Leftrightarrow&  I + \Gamma K_1 = (X^\top  X)^{-1} (X^\top X + K_2) \\
&\Leftrightarrow&  \Gamma K_1 = (X^\top  X)^{-1} K_2.
\end{eqnarray*}
This completes the proof.\qed
\end{proof}

Using Theorem \ref{GRC} with $K_1=K_2=K$, we have the following corollary.

\begin{cor}\label{cor22}
For $K \in \mathcal{S}^N(k)$, the equality $\hat\beta(\Omega , K) = \hat\beta(I , K)$ holds if and only if 
$\Omega$ is of the form \eqref{RCS} with $\Gamma$ satisfying 
$X^\top  X \Gamma K = K.$
In particular, if $K $ is nonsingular, then
$\hat\beta(\Omega , K) = \hat\beta(I , K) \Leftrightarrow \Omega = X (X^\top  X) ^{-1} X^\top  + Z\Delta Z^\top$
for some $\Delta\in \mathcal{S}^+(n-k)$. 
\end{cor}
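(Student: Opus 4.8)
The plan is to derive Corollary~\ref{cor22} directly from Theorem~\ref{GRC} by specializing to $K_1 = K_2 = K$. The theorem tells us that $\hat\beta(\Omega,K) = \hat\beta(I,K)$ holds if and only if $\Omega$ has the form \eqref{RCS} with $\Gamma \in \mathcal{S}^+(k)$ and $\Delta \in \mathcal{S}^+(n-k)$ satisfying the compatibility condition \eqref{t1j}, which in this case reads $X^\top X \Gamma K = K$. This immediately gives the first assertion; essentially nothing needs to be proved beyond quoting the theorem with the substitution made.

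For the second assertion, I would assume $K$ is nonsingular and analyze the condition $X^\top X \Gamma K = K$. Since $K$ is invertible, I can multiply on the right by $K^{-1}$ to obtain $X^\top X \Gamma = I$, equivalently $\Gamma = (X^\top X)^{-1}$. Substituting this particular $\Gamma$ back into the representation \eqref{RCS} yields $\Omega = X(X^\top X)^{-1}X^\top + Z\Delta Z^\top$, and conversely any $\Omega$ of this form arises from $\Gamma = (X^\top X)^{-1} \in \mathcal{S}^+(k)$ together with some $\Delta \in \mathcal{S}^+(n-k)$, and this $\Gamma$ satisfies $X^\top X \Gamma K = K$ trivially. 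So the equivalence in the nonsingular case follows. One should note that in this regime $\Gamma$ is forced to be unique, whereas for singular $K$ there is genuine freedom in $\Gamma$ subject only to $X^\top X \Gamma K = K$.

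There is no real obstacle here: the corollary is a routine specialization, and the only point requiring a line of care is that when $K$ is nonsingular the constraint $X^\top X\Gamma K = K$ collapses to a single admissible value of $\Gamma$, which one checks is indeed positive definite (it is, being the inverse of $X^\top X$, which is positive definite since $\rk(X) = k$). I would present the argument in two short sentences: one invoking Theorem~\ref{GRC} with $K_1 = K_2 = K$, and one handling the cancellation of the nonsingular $K$. If desired, one can also remark that the resulting $\Omega$ is precisely the Rao-type covariance structure \eqref{RCS} with $\Gamma$ pinned down, linking the statement back to the classical $\tilde\beta_\gme = \tilde\beta_\ols$ case recovered at $K = 0$ (where the constraint becomes vacuous and $\Gamma$ is arbitrary again).
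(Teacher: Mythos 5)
Your proposal is correct and matches the paper's approach exactly: the paper derives the corollary by applying Theorem~\ref{GRC} with $K_1=K_2=K$, and your cancellation of the nonsingular $K$ to pin down $\Gamma=(X^\top X)^{-1}$ is the intended (and valid) argument for the second assertion.
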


\begin{rem}\label{rm5}
Let $K=\lambda I\in \mathcal{S}^+(k)$.
Then Corollary \ref{cor22} implies that $\hat\beta(\Omega , \lambda I) = \hat\beta(I ,  \lambda I)$ is equivalent to
$\Omega = X(X^\top X)^{-1}X^\top  + Z\Delta Z^\top$
for some $\Delta \in \mathcal{S}^+(n-k)$.
\end{rem}

\begin{rem}\label{rm3}
Suppose that $\Omega$ satisfies \eqref{RCS}.
Let $K_1 = \rho X^\top  \Omega^{-1} X$
and $K_2 = \rho X^\top  X$ with $\rho>0$.
Then the two matrices satisfy the condition \eqref{t1j}.
In fact,  by \eqref{Oinv}, we see that $X^\top  \Omega^{-1} X=\Gamma^{-1}$ and hence
$K_1=\rho \Gamma^{-1}$, implying $X^\top X\Gamma K_1 =  X^\top X\Gamma (\rho \Gamma^{-1})=\rho X^\top X = K_2$.
Thus Theorem \ref{GRC} applies and hence the equality $\hat\beta(\Omega , \rho X^\top  \Omega^{-1} X) = \hat\beta(I , \rho X^\top  X)$ holds.
More specifically, the condition \eqref{RCS} is necessary and sufficient for $\hat\beta(\Omega , \rho X^\top  \Omega^{-1} X) = \hat\beta(I , \rho X^\top  X)$.
This conclusion itself is obvious from the forms of the shrinkage estimators.
\end{rem}

Next we clarify when there exists $\Gamma$ satisfying the condition \eqref{t1j} for given $K_1 , K_2 \in \mathcal{S}^N(k)$.
For this purpose, let
\begin{eqnarray*}
&& \bar{K}_i = (X^\top X)^{-1/2} K_i (X^\top X)^{-1/2}  \quad (i=1,2) ,\\
&& \bar{\Gamma} = (X^\top X)^{1/2} \Gamma (X^\top X)^{1/2}. 
\end{eqnarray*}
Needless to say, $\bar{K}_i$ and $\bar{\Gamma}$ have a one to one correspondence with $K_i$ and $\Gamma$, respectively.

\begin{prop}\label{propk}
There exists $\Gamma\in \mathcal{S}^+(k)$ satisfying \eqref{t1j} if and only if $\bar{K}_1$ and $\bar{K}_2$ satisfy
\begin{equation}
 \mathcal{R}(\bar{K}_1) = \mathcal{R}(\bar{K}_2)  \quad {\rm and} \quad \bar{K}_1 \bar{K}_2 = \bar{K}_2 \bar{K}_1, \label{condition}
\end{equation} 
where $\mathcal{R}(\bar{K}_i)$ denotes the range of $\bar{K}_i$. 
In this case, $\Gamma$ is of the form
\begin{eqnarray}
\Gamma = (X^\top X)^{-1/2} \left\{ \bar{K}_2 \bar{K}_1^+ + (I -  \bar{K}_1 \bar{K}_1^+) H (I -  \bar{K}_1 \bar{K}_1^+) \right\} (X^\top X)^{-1/2} \nonumber \\  {\rm for\ some}\ \ H \in \mathcal{S}^+(k), \label{GammaExpression}
\end{eqnarray}
where $\bar{K}_1^+$ denotes the Moore--Penrose inverse of $\bar{K}_1$.
\end{prop}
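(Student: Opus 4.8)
The plan is to reduce the equation \eqref{t1j} to a cleaner form by absorbing the factors $(X^\top X)^{\pm 1/2}$. Since $X^\top X \in \mathcal{S}^+(k)$, its symmetric square root is invertible, and substituting $K_i = (X^\top X)^{1/2}\bar K_i (X^\top X)^{1/2}$ and $\Gamma = (X^\top X)^{-1/2}\bar\Gamma(X^\top X)^{-1/2}$ into $X^\top X\Gamma K_1 = K_2$ turns it, after left-multiplying by $(X^\top X)^{-1/2}$ and right-multiplying by $(X^\top X)^{-1/2}$, into the equivalent equation $\bar\Gamma \bar K_1 = \bar K_2$ with the constraint $\bar\Gamma \in \mathcal{S}^+(k)$. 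So the proposition is equivalent to: there exists $\bar\Gamma \in \mathcal{S}^+(k)$ with $\bar\Gamma\bar K_1 = \bar K_2$ if and only if \eqref{condition} holds, with the parametrization of solutions given by \eqref{GammaExpression} stripped of its conjugating factors, namely $\bar\Gamma = \bar K_2\bar K_1^+ + (I-\bar K_1\bar K_1^+)H(I-\bar K_1\bar K_1^+)$ for $H\in\mathcal{S}^+(k)$.

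For the \emph{necessity} direction, suppose $\bar\Gamma\bar K_1 = \bar K_2$ with $\bar\Gamma$ positive definite. Taking ranges gives $\mathcal{R}(\bar K_2)=\mathcal{R}(\bar\Gamma\bar K_1)=\mathcal{R}(\bar K_1)$ because $\bar\Gamma$ is nonsingular and hence range-preserving. For commutativity, transpose the equation: since $\bar K_1,\bar K_2,\bar\Gamma$ are all symmetric, $\bar K_1\bar\Gamma = \bar K_2$, so $\bar\Gamma\bar K_1 = \bar K_1\bar\Gamma$; then $\bar K_1\bar K_2 = \bar K_1\bar\Gamma\bar K_1$ and $\bar K_2\bar K_1 = \bar K_1\bar\Gamma\bar K_1$ as well (using $\bar\Gamma\bar K_1=\bar K_1\bar\Gamma$ once more), so $\bar K_1\bar K_2=\bar K_2\bar K_1$. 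For the \emph{sufficiency} direction and the explicit form, assume \eqref{condition}. Because $\bar K_1$ and $\bar K_2$ are commuting symmetric (indeed positive semidefinite) matrices, they are simultaneously diagonalizable by an orthogonal matrix; work in that eigenbasis. The common range condition $\mathcal{R}(\bar K_1)=\mathcal{R}(\bar K_2)$ means the two matrices have the same zero/nonzero pattern of eigenvalues on this basis. Write $P = \bar K_1\bar K_1^+$, the orthogonal projector onto $\mathcal{R}(\bar K_1)=\mathcal{R}(\bar K_2)$. On the range block, $\bar K_2\bar K_1^+$ acts as a positive definite operator (product of commuting positive definite operators restricted there), and on the complementary block $(I-P)H(I-P)$ contributes a positive definite operator for $H\in\mathcal{S}^+(k)$; hence the candidate $\bar\Gamma$ is positive definite. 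One then checks $\bar\Gamma\bar K_1 = \bar K_2\bar K_1^+\bar K_1 + (I-P)H(I-P)\bar K_1 = \bar K_2 P + 0 = \bar K_2$, using $\bar K_1^+\bar K_1 = P$, $(I-P)\bar K_1 = 0$, and $\bar K_2 P = \bar K_2$ (the latter from $\mathcal{R}(\bar K_2)\subseteq\mathcal{R}(P)$).

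It remains to show \emph{every} solution has the form \eqref{GammaExpression}. Given any symmetric $\bar\Gamma$ with $\bar\Gamma\bar K_1=\bar K_2$, decompose $\bar\Gamma = P\bar\Gamma P + P\bar\Gamma(I-P) + (I-P)\bar\Gamma P + (I-P)\bar\Gamma(I-P)$. Right-multiplying $\bar\Gamma\bar K_1 = \bar K_2$ by $\bar K_1^+$ gives $\bar\Gamma P = \bar K_2\bar K_1^+$; since the left side equals $P\bar\Gamma P + (I-P)\bar\Gamma P$ and the right side lies in $\mathcal{R}(P)$ in both row and column (as $\bar K_2\bar K_1^+ = P\bar K_2\bar K_1^+ P$, using the common range), we get $(I-P)\bar\Gamma P = 0$ and $P\bar\Gamma P = \bar K_2\bar K_1^+$; by symmetry $P\bar\Gamma(I-P)=0$ too. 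Hence $\bar\Gamma = \bar K_2\bar K_1^+ + (I-P)\bar\Gamma(I-P)$, and setting $H := \bar\Gamma$ (or any symmetric matrix with the same $(I-P)$-block) exhibits the claimed form; positive definiteness of $\bar\Gamma$ forces the $(I-P)$-block to be positive definite on the complementary subspace, consistent with choosing $H\in\mathcal{S}^+(k)$. Conjugating back by $(X^\top X)^{-1/2}$ on both sides recovers \eqref{GammaExpression}. The main obstacle I anticipate is the bookkeeping in this last paragraph: correctly arguing that $\bar K_2\bar K_1^+$ is symmetric and $P$-supported on both sides (which is exactly where both halves of \eqref{condition} are used jointly), and verifying that positive definiteness of the assembled $\bar\Gamma$ is equivalent to positive definiteness of each diagonal block — this uses that the off-diagonal blocks vanish, so the block-diagonal structure makes the equivalence transparent.
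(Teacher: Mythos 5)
Your proof is correct, and it follows the same overall skeleton as the paper's (reduce \eqref{t1j} to $\bar\Gamma\bar K_1=\bar K_2$, then argue necessity and sufficiency), but the two key steps of the necessity direction are carried out by genuinely different means. For commutativity, the paper first invokes a common orthonormal factorization $\bar K_i=VD_iV^\top$, writes $\bar\Gamma=VFV^\top+WGW^\top$ using $\bar\Gamma\bar K_1=\bar K_1\bar\Gamma$, takes $D_1$ and $F$ diagonal, and reads off that $D_2=FD_1$ must be diagonal too; you instead get $\bar K_1\bar K_2=\bar K_1\bar\Gamma\bar K_1=\bar K_2\bar K_1$ by a one-line symmetric-transpose computation, which is more elementary and coordinate-free. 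For the classification of all solutions, the paper again works in the $(V,W)$ basis to land on $\bar\Gamma=VD_2D_1^{-1}V^\top+WGW^\top$, while you decompose $\bar\Gamma$ into blocks relative to the projector $P=\bar K_1\bar K_1^+$, extract $P\bar\Gamma P=\bar K_2\bar K_1^+$ and kill the off-diagonal blocks by symmetry; this avoids choosing a basis but obliges you to verify that $\bar K_2\bar K_1^+$ is symmetric and $P$-supported on both sides (which you correctly flag, and which follows from the two halves of \eqref{condition} exactly as you say). The sufficiency check is identical in both treatments. What the paper's diagonalization buys is that the positive definiteness and explicit shape of the range block $VD_2D_1^{-1}V^\top$ are immediate from the eigenvalue ratios; what your projector argument buys is a cleaner, basis-free derivation of the general form and a sharper isolation of where each hypothesis in \eqref{condition} is used.
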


\begin{proof}
Suppose first that the condition \eqref{t1j} holds, which is equivalent to
\begin{equation}
\bar{\Gamma} \bar{K}_1 = \bar{K}_2.\label{condition2}
\end{equation}
Here, the two matrices in the left hand side commute, i.e., $\bar{K}_1\bar{\Gamma}=\bar{\Gamma}\bar{K}_1$,
since  $\bar{K}_2$ is symmetric.
Due to the nonsingularity of  $\bar{\Gamma} $, the matrices $\bar{K}_i$'s must satisfy $\mathcal{R}(\bar{K}_1) = \mathcal{R}(\bar{K}_2)$.
%Let $\rk(\bar{K}_1) = \rk(\bar{K}_2) = r$, which is the dimension of $\mathcal{R}(\bar{K}_i)$.
Hence, by letting  $\rk(\bar{K}_1) = \rk(\bar{K}_2) = r$, they can be commonly expressed as
\[ \bar{K}_i = V D_i V^\top \ (i=1,2)
\]
with $D_i \in \mathcal{S}^+(r)$ and $V$ a $k \times r$ matrix satisfying $V^\top V =I$.
Since $\bar {\Gamma} \bar{K}_1 = \bar{K}_1 \bar {\Gamma}$, we can write
$ \bar{\Gamma} = VFV^\top + WGW^\top $
for some $F\in \mathcal{S}^+(r)$, $G \in \mathcal{S}^+(k-r)$ and $W$ a $k \times (k-r)$ matrix satisfying $W^\top W =I$ and $V^\top W =0$.
Furthermore,  $D_1$ and  $F$ can be taken as diagonal matrices.
Hence the equality $ \bar{\Gamma} \bar{K}_1 = \bar{K}_2$ implies $V F D_1 V^\top = V D_2 V^\top ,$
which can be rewritten as $FD_1 =D_2$.
Therefore, $D_2$ must be also diagonal, which implies that $\bar{K}_1$ and $\bar{K}_2$ commute.
Thus we have \eqref{condition} and 
\begin{eqnarray}
 \bar{\Gamma} &=& V D_2 D_1^{-1} V^\top + W G W^\top \ \ {\rm for\ some}\ G \in \mathcal{S}^+(k-r), \nonumber \\
&=& V D_2 V^\top (VD_1^{-1} V^\top)+W W^\top H W W^\top \ \ {\rm for\ some}\ H \in \mathcal{S}^+(k),\nonumber 
\end{eqnarray}
where the last expression is equivalent to \eqref{GammaExpression}, since 
$\bar{K}_1^+=VD_1^+V^\top$ and $WW^\top=I-\bar{K}_1\bar{K}_1^+$.

Conversely, suppose that \eqref{condition} hold.
Then, by letting $\Gamma$ as in \eqref{GammaExpression}, we have
\begin{eqnarray*}
\bar{\Gamma}\bar{K}_1&=& \bar{K}_2 \bar{K}_1^+ \bar{K}_1+ (I -  \bar{K}_1 \bar{K}_1^+) H (I -  \bar{K}_1 \bar{K}_1^+)\bar{K}_1 \\
&=& \bar{K}_2 \bar{K}_1^+ \bar{K}_1 \\
&=& \bar{K}_2 \bar{K}_2^+ \bar{K}_2 \\ 
&=& \bar{K}_2,
\end{eqnarray*}
since $\mathcal{R}(\bar{K}_1)=\mathcal{R}(\bar{K}_2)$ implies $\bar{K}_1^+ \bar{K}_1=\bar{K}_2^+ \bar{K}_2$.
This shows the existence of $\Gamma$ that satisfies \eqref{condition2}, which is equivalent to \eqref{t1j}.
This completes the proof.\qed
\end{proof}

\begin{rem}
When $\Omega$ is unknown, it is often assumed that $\det(\Omega)=1$ to make the model identifiable (\citet{RefKa}).
In this case, in order that $\hat\beta(\Omega,K_1) = \hat\beta(I,K_2)$ holds, the matrices $\Gamma$ and $\Delta$ should satisfy
\[ \det(\Omega) 
= \det \left( (X,Z) \left(\begin{matrix}  \Gamma & 0 \\ 0 & \Delta \end{matrix} \right) \left(\begin{matrix}  X^\top  \\  Z^\top \end{matrix} \right) \right) 
= \left( \det((X , Z)) \right)^2 \det(\Gamma) \det(\Delta) =1 \]
as well as \eqref{t1j}.
In particular, when $K_1$ and $K_2$ are positive definite, the matrix $\Delta$ should satisfy
\[\det(\Delta) = \frac{\det(X^\top X) \det(K_1) }{ (\det((X,Z)))^{2} \det(K_2) }. \]
\end{rem}

\section{Equality between residual sums of squares}\label{sec3}
In this section, we discuss a condition under which the identical equality 
\[ \GR(\Omega,K_1)= \GR(I,K_2) \]
 holds in addition to  $\hat\beta(\Omega,K_1)=\hat\beta(I,K_2)$,
where the general residual sum of squares $ \GR(\Omega,K_1)$ is defined in \eqref{GRSS}.
To make notations simpler, let us denote
\begin{eqnarray}
&& A = (X^\top X)^{-1} \Gamma^{-1} (X^\top X)^{-1}, \nonumber \\ 
&& B = I - X(X^\top X + K_2)^{-1} X^\top,\label{AB}
\end{eqnarray}
where $A$ is positive definite and $B$ is positive semidefinite.

\begin{thm}\label{prop2}
For $K_1, K_2\in S^N (k)$, the two identical equalities 
\[ \hat\beta(\Omega,K_1)=\hat\beta(I,K_2) \quad {\rm and} \quad \GR(\Omega,K_1) = \GR(I,K_2) \]
simultaneously hold if and only if the following three conditions
\begin{eqnarray}
&& \Omega = X^\top \Gamma X + Z^\top  (Z^\top Z)^{-1} Z \label{t2j}  \\
&& X^\top X\Gamma K_1 =  K_2 \label{t2j0}  \\
&& X^\top B X  \{ A - (X^\top X)^{-1} \}  X^\top BX =0  \label{t2j1}
\end{eqnarray}
hold for some $\Gamma \in \mathcal{S}^+(k)$.
\end{thm}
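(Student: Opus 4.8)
The plan is to reduce the pair of equalities to the single matrix identity $B(\Omega^{-1}-I)B=0$ and then to split that identity along the orthogonal decomposition $\Bbb{R}^n=\mathcal R(X)\oplus\mathcal R(Z)$ (recall $X^\top Z=0$ and $\rk(Z)=n-k$). Every step will be an equivalence, so necessity and sufficiency are obtained simultaneously.

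First I would invoke Theorem \ref{GRC}: the equality $\hat\beta(\Omega,K_1)=\hat\beta(I,K_2)$ holds if and only if $\Omega$ is of the form \eqref{RCS} with some $\Gamma\in\mathcal S^+(k)$ and $\Delta\in\mathcal S^+(n-k)$ satisfying \eqref{t2j0} (which is exactly \eqref{t1j}); in that case \eqref{Oinv} gives $\Omega^{-1}=XAX^\top+ZEZ^\top$ with $A$ as in \eqref{AB} and $E=(Z^\top Z)^{-1}\Delta^{-1}(Z^\top Z)^{-1}$. Moreover, once the first equality holds, $X\hat\beta(\Omega,K_1)=X\hat\beta(I,K_2)=X(X^\top X+K_2)^{-1}X^\top y$, so the two residual vectors coincide, $y-X\hat\beta(\Omega,K_1)=y-X\hat\beta(I,K_2)=By$; hence $\GR(\Omega,K_1)=y^\top B\Omega^{-1}B\,y$ and $\GR(I,K_2)=y^\top B^2y$ (using $B^\top=B$). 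Since $B\Omega^{-1}B$ and $B^2$ are symmetric, the equality $\GR(\Omega,K_1)=\GR(I,K_2)$ for every $y$ is equivalent to $B\Omega^{-1}B=B^2$, i.e. to $B(\Omega^{-1}-I)B=0$.

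Next I would evaluate $B(\Omega^{-1}-I)B$. Writing $I=X(X^\top X)^{-1}X^\top+Z(Z^\top Z)^{-1}Z^\top$ and substituting $\Omega^{-1}=XAX^\top+ZEZ^\top$ yields $\Omega^{-1}-I=X\{A-(X^\top X)^{-1}\}X^\top+Z\{E-(Z^\top Z)^{-1}\}Z^\top$. The identities $BZ=Z$ and $X^\top Z=0$ give $Z^\top B=Z^\top$ and $BX=X(X^\top X)^{-1}(X^\top BX)$ (so that $X^\top B=(X^\top BX)(X^\top X)^{-1}X^\top$), whence
\[
B(\Omega^{-1}-I)B = X(X^\top X)^{-1}(X^\top BX)\{A-(X^\top X)^{-1}\}(X^\top BX)(X^\top X)^{-1}X^\top + Z\{E-(Z^\top Z)^{-1}\}Z^\top .
\]
As $(X,Z)$ is nonsingular, this vanishes if and only if each summand does; cancelling the full-rank factors (using $\rk(X)=k$ and $\rk(Z)=n-k$), this is equivalent to $(X^\top BX)\{A-(X^\top X)^{-1}\}(X^\top BX)=0$ and $E=(Z^\top Z)^{-1}$. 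The former is \eqref{t2j1}; the latter is $\Delta=(Z^\top Z)^{-1}$, which turns \eqref{RCS} into \eqref{t2j}. Combined with \eqref{t2j0} coming from Theorem \ref{GRC}, this gives the three stated conditions. Conversely, starting from \eqref{t2j}, \eqref{t2j0}, \eqref{t2j1}: \eqref{t2j} is \eqref{RCS} with $\Delta=(Z^\top Z)^{-1}\in\mathcal S^+(n-k)$ and \eqref{t2j0} is \eqref{t1j}, so Theorem \ref{GRC} gives the first equality, and running the above chain backwards (using \eqref{t2j1}) gives the second.

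I do not expect a genuine obstacle; the one step that needs a little care is the identity $BX=X(X^\top X)^{-1}(X^\top BX)$, equivalently $X^\top B=(X^\top BX)(X^\top X)^{-1}X^\top$. It holds because $BZ=Z$ and $X^\top Z=0$ force $X^\top B$ to annihilate $\mathcal R(Z)=\mathcal R(X)^\perp$, so $X^\top B=X^\top B\cdot X(X^\top X)^{-1}X^\top$. Everything else is routine algebra on the two complementary subspaces $\mathcal R(X)$ and $\mathcal R(Z)$ together with cancellation of $X$ and $Z$ by full column rank.
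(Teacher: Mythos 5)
Your proof is correct and follows essentially the same route as the paper's: invoke Theorem \ref{GRC} for the first equality, observe that the residual vectors then coincide so that the second equality reduces to $B\Omega^{-1}B=BB$, and split that identity along the decomposition induced by $(X,Z)$ to obtain \eqref{t2j1} and $\Delta=(Z^\top Z)^{-1}$. The only difference is organizational — you factor $B$ through $BX=X(X^\top X)^{-1}(X^\top BX)$ and $BZ=Z$ to write $B(\Omega^{-1}-I)B$ directly as $XCX^\top+ZNZ^\top$, whereas the paper expands $B\Omega^{-1}B$ and $BB$ in full and then sandwiches with $X^\top(\cdot)X$ and $Z^\top(\cdot)Z$; both are the same argument.
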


\begin{proof}
From Theorem \ref{GRC}, $\hat\beta(\Omega,K_1)=\hat\beta(I,K_2)$ is equivalent to $\Omega = X \Gamma X^\top + Z \Delta Z^\top$ with $X^\top X\Gamma K_1 =  K_2$.
In this case, it holds that
\begin{equation}
 \Omega^{-1} = XAX^\top  + Z (Z^\top Z)^{-1} \Delta^{-1} (Z^\top Z)^{-1} Z^\top,  \label{Oinv}
\end{equation} 
and $y - X \hat\beta(\Omega,K_1)=y - X \hat\beta(I,K_2) $.
This implies that 
\[
\GR(\Omega,K_1) 
= (y-X\hat\beta(I,K_2))^\top \Omega^{-1}(y-X\hat\beta(I,K_2)) 
= y^\top B \Omega^{-1} B y
\]
with $B$ given in \eqref{AB},
and 
\[\GR(I,K_2) =  (y-X\hat\beta(I,K_2))^\top (y-X\hat\beta(I,K_2)) = y^\top B B y.\]
Thus the problem is to find a condition under which 
$ y^\top (B \Omega^{-1} B - B B ) y = 0 $
holds for arbitrary $y$, which is clearly equivalent to 
$B \Omega^{-1} B = B B$.
The quantities $B\Omega^{-1}B$ and $BB$ are calculated respectively as 
\begin{eqnarray*}
 B\Omega^{-1}B 
&=& X A X^\top + Z (Z^\top Z)^{-1} \Delta^{-1} (Z^\top Z)^{-1} Z^\top \\
&& - X(X^\top X+K_2)^{-1} X^\top X A X^\top  - XAX^\top X (X^\top X+K_2)^{-1} X^\top  \\
&& + X (X^\top X + K_2)^{-1} X^\top X A X^\top X (X^\top X +K_2)^{-1} X^\top
\end{eqnarray*}
and
\[ B B 
= I - 2 X(X^\top X +K_2)^{-1} X^\top + X(X^\top X +K_2)^{-1} X^\top X (X^\top X +K_2)^{-1} X^\top , \]
where \eqref{Oinv} is used.
Since 
\[ X^\top(B \Omega^{-1} B - BB )Z=0 \]
holds, the equality  
\[ B \Omega^{-1} B = B B\]
is equivalent to 
\begin{equation}
X^\top (B \Omega^{-1} B - BB) X =0
\label{p3g1}
\end{equation}
and
\begin{equation}
Z^\top (B \Omega^{-1} B - BB) Z =0.
\label{p3g2}
\end{equation}
The equality \eqref{p3g1} can be rewritten as
\begin{eqnarray*}
&& \Gamma^{-1} - X^\top X - X^\top X(X^\top X + K_2)^{-1} (\Gamma^{-1} - X^\top X) \\
&& - (\Gamma^{-1} - X^\top X) (X^\top X +K_2)^{-1} X^\top X  \\
&& + X^\top X (X^\top X +K_2)^{-1}  (\Gamma^{-1} - X^\top X) (X^\top X +K_2)^{-1} X^\top X  =0 \\
&\Leftrightarrow& \{ I - X^\top X(X^\top X+K_2)^{-1} \} (\Gamma^{-1} - X^\top X) \{ I -(X^\top X+K_2)^{-1} X^\top X \}   =0 \\
&\Leftrightarrow& X^\top \{ I - X(X^\top X+K_2)^{-1} X^\top \} X (X^\top X)^{-1} (\Gamma^{-1} - X^\top X) \\ && (X^\top X)^{-1} X^\top \{ I -(X^\top X+K_2)^{-1} X^\top  \} X =0 \\
&\Leftrightarrow& X^\top B X  \{ A - (X^\top X)^{-1} \}  X^\top BX =0 
\end{eqnarray*}
and \eqref{p3g2} is the same as
\[ \Delta^{-1} - Z^\top Z =0 \Leftrightarrow \Delta = (Z^\top Z)^{-1} . \]
This completes the proof. \qed
\end{proof}

\begin{rem}
The above theorem can be viewed as an extension of 
\cite{RefKa} (Corollary), in which it is shown that \eqref{t2j} is a necessary and sufficient condition under which $\hat\beta(\Omega,0)=\hat\beta(I,0)$ and $\GR(\Omega,0) = \GR(I,0)$ simulataneously hold.
In fact, in Theorem \ref{prop2}, let $K_1=K_2=0$.
Then the conditions \eqref{t2j0} and \eqref{t2j1} vanish, since they hold for all $\Gamma\in \mathcal{S}^+(k)$.
Hence, the conditions in the above theorem reduces to \eqref{t2j}.
\end{rem}

\begin{cor}\label{cor33}
Let $K \in \mathcal{S}^+(k)$.
The two equalities $\hat\beta(\Omega , K) = \hat\beta(I , K)$ and $\GR(\Omega,K) = \GR(I,K)$ simultaneously hold if and only if $\Omega = I$.
\end{cor}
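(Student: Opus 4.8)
The plan is to invoke Theorem~\ref{prop2} with $K_1=K_2=K$ and show that, because $K$ is positive definite, the three conditions \eqref{t2j}, \eqref{t2j0}, \eqref{t2j1} force $\Gamma=(X^\top X)^{-1}$ and $\Delta=(Z^\top Z)^{-1}$, which by \eqref{t2j} gives $\Omega = X(X^\top X)^{-1}X^\top + Z(Z^\top Z)^{-1}Z^\top = (X,Z)\left(\begin{smallmatrix} (X^\top X)^{-1} & 0 \\ 0 & (Z^\top Z)^{-1}\end{smallmatrix}\right)(X,Z)^\top = I$, the last equality being the standard identity $X(X^\top X)^{-1}X^\top + Z(Z^\top Z)^{-1}Z^\top = I$ for complementary full-rank blocks. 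The converse ($\Omega=I$ implies both equalities) is immediate since then $\hat\beta(\Omega,K)=\hat\beta(I,K)$ and $\GR(\Omega,K)=\GR(I,K)$ trivially.

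For the forward direction the first step is \eqref{t2j0}: with $K_1=K_2=K$ nonsingular it reads $X^\top X\,\Gamma K = K$, hence $X^\top X\,\Gamma = I$, i.e. $\Gamma=(X^\top X)^{-1}$. This is exactly the ``in particular'' clause already recorded in Corollary~\ref{cor22}. The second step is to feed $\Gamma=(X^\top X)^{-1}$ into the definition \eqref{AB} of $A$: we get $A=(X^\top X)^{-1}\Gamma^{-1}(X^\top X)^{-1} = (X^\top X)^{-1}(X^\top X)(X^\top X)^{-1} = (X^\top X)^{-1}$, so $A-(X^\top X)^{-1}=0$ and condition \eqref{t2j1} holds automatically. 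Thus the only remaining constraint from Theorem~\ref{prop2} is \eqref{t2j} itself, which with $\Gamma=(X^\top X)^{-1}$ becomes $\Omega = X(X^\top X)^{-1}X^\top + Z\Delta Z^\top$.

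The third step recovers $\Delta$. Although \eqref{t2j} as displayed pins $\Delta=(Z^\top Z)^{-1}$ directly (the second summand there is $Z(Z^\top Z)^{-1}Z^\top$), the cleanest self-contained argument is to note that in the proof of Theorem~\ref{prop2} the condition \eqref{p3g2}, namely $Z^\top(B\Omega^{-1}B-BB)Z=0$, was shown to be equivalent to $\Delta^{-1}=Z^\top Z$; since that condition is part of what ``$\GR(\Omega,K)=\GR(I,K)$'' encodes, we obtain $\Delta=(Z^\top Z)^{-1}$. Plugging $\Gamma=(X^\top X)^{-1}$ and $\Delta=(Z^\top Z)^{-1}$ into \eqref{RCS} and using the projector identity gives $\Omega=I$.

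I do not anticipate a genuine obstacle here: once $K$ is invertible the three conditions collapse almost mechanically, and the only mildly non-routine point is the matrix identity $X(X^\top X)^{-1}X^\top + Z(Z^\top Z)^{-1}Z^\top = I_n$, which holds because the two summands are the orthogonal projectors onto $\mathcal{R}(X)$ and $\mathcal{R}(Z)=\mathcal{R}(X)^\perp$ (using $X^\top Z=0$ and $\rk(X)+\rk(Z)=k+(n-k)=n$). So the proof is essentially a specialization bookkeeping exercise applied to Theorem~\ref{prop2}.
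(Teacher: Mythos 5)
Your proposal is correct and follows essentially the same route as the paper: invoke Theorem~\ref{prop2} with $K_1=K_2=K$, use nonsingularity of $K$ to get $\Gamma=(X^\top X)^{-1}$ from \eqref{t2j0}, observe that \eqref{t2j1} then holds automatically since $A=(X^\top X)^{-1}$, and conclude $\Omega = X(X^\top X)^{-1}X^\top + Z(Z^\top Z)^{-1}Z^\top = I$. The extra detail you supply about recovering $\Delta=(Z^\top Z)^{-1}$ and the projector identity is consistent with, and merely makes explicit, what the paper leaves implicit.
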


\begin{proof}
Letting $K_1=K_2=K$, we will use Theorem \ref{prop2}.
From \eqref{t2j0}, $\Gamma=(X^\top X)^{-1}$.
Since $A=(X^\top X)^{-1}$ which yields \eqref{t2j1}, Theorem \ref{prop2} implies that both $\hat\beta(\Omega,\lambda I)=\hat\beta(I,\lambda I)$ and $\GR(\Omega,\lambda I) = \GR(I,\lambda I)$ hold if and only if
\[ \Omega =X  (X^\top  X)^{-1} X^\top + Z  (Z^\top Z)^{-1} Z^\top = I . \]
This completes the proof.\qed
\end{proof}

\begin{rem}
Let $K =\lambda I \in \mathcal{S}^+(k)$.
Then Corollary \ref{cor33} implies that the two equalities $\hat\beta(\Omega , \lambda I) = \hat\beta(I , \lambda I)$ and $\GR(\Omega, \lambda I) = \GR(I, \lambda I)$ simultaneously hold if and only if $\Omega = I$.
\end{rem}

\section{Classification criterion of dispersion matrices}\label{sec4}

As is observed in the previous sections, the condition in Theorem \ref{GRC} on $\Omega$  rarely holds, and hence
the estimators $\hat\beta(\Omega,K_1)$ and $\hat\beta(I,K_2)$ do not coincide in most cases.
In the context of comparing the Gauss-Markov and the ordinary least squares estimators,
\cite{RefK} used 
\begin{equation}
 \rk \left( {\cov}(\hat\beta(I,0) - \hat\beta(\Omega,0)) \right) 
%= \rk(\Xi) 
= \rk(X^\top \Omega Z) =\rk(X^\top \Omega^{-1} Z)  
\label{KC} 
\end{equation}
as a criterion to measure the difference between them.
The rank ranges from $0$ to $\min(k, n-k)$ and takes zero if and only if $\Omega $ is of the form \eqref{RCS}.
Hence this criterion can also be regarded as a measure of the extent to which the structure of $\Omega$ deviates from \eqref{RCS},
or equivalently, a criterion to classify $\Omega$.
This section is devoted to deriving a generalization of his result to the case including general ridge estimators.

Since the quantity \eqref{KC} is the same as
\[ \rk \left( {\sf E} \left[ (\hat\beta(I,0) - \hat\beta(\Omega,0)) (\hat\beta(I,0) - \hat\beta(\Omega,0))^\top  \right] \right) ,\]
it is natural to use the rank of $L^2$ difference matrix 
\begin{equation} \label{prc1}
 \dif(\Omega,K_1,K_2)= {\sf E} \left[ (\hat\beta(I,K_2) - \hat\beta(\Omega,K_1)) (\hat\beta(I,K_2) - \hat\beta(\Omega,K_1))^\top \right]
\end{equation}
as a measure that is applicable to general ridge estimators.
Since we have
\begin{eqnarray*}
&& {\cov} \left( \hat\beta(I,0) - \hat\beta(\Omega,0) \right) \\
&=&  {\cov} \left( \left\{ (X^\top X)^{-1} X^\top - (X^\top \Omega^{-1} X)^{-1} X^\top \Omega^{-1} \right\} y \right) \\
&=& \sigma^2 \left\{ (X^\top X)^{-1} X^\top \Omega X (X^\top X)^{-1} - ( X^\top \Omega^{-1} X )^{-1} \right\} \\
&=&  {\cov}\left( \hat\beta(I,0) \right) - \cov \left( \hat\beta(\Omega,0) \right) \\
&=& \mm\left(\hat\beta(I,0)\right) -  \mm \left( \hat\beta(\Omega,0) \right),
\end{eqnarray*}
where 
\[ \mm \left( \hat\beta(\Psi,K) \right) ={ \sf E} \left[ (\hat\beta(\Psi,K)-\beta)(\hat\beta(\Psi,K)-\beta)^\top \right] \]
is the mean square error matrix (see, for example, (3.1) of \cite{RefG}), it is also natural to use
the rank of 
\begin{equation} \label{prc2}
\diff(\Omega,K_1,K_2)=
  \mm\left(\hat\beta(I,K_2)\right) -  \mm\left(\hat\beta(\Omega,K_1)\right).  
\end{equation}
We adopt the above two quantities in the sequel.

However, since it is in general not easy to analyze them unless $K_i=0$,
we limit our consideration to  the case in which both $K_1$ and $K_2$ are small.
More precisely, we fix $L_1$, $L_2$ in $\mathcal{S}^N(k)$ and use the perturbation approach by
letting $K_1 =\epsilon L_1$, $K_2 =\epsilon L_2$ with a small positive constant $\epsilon$.
Note that $\Omega\in \mathcal{S}^+(n)$ can be expressed as
\begin{equation}
\Omega= X\Gamma X^\top  + Z\Delta Z^\top  + X \Xi Z^\top  + Z \Xi^\top  X^\top
\label{Oexp}
\end{equation}
for some $\Gamma\in \mathcal{S}^+(k)$, $\Delta\in\mathcal{S}^+(n-k)$
and $\Xi: k\times (n-k)$.

\begin{thm} \label{prop4}
Fix $L_1, L_2 \in \mathcal{S}^N(k)$ and $\Omega\in \mathcal{S}^+(n)$, and write $\Omega$ as in \eqref{Oexp}.
Consider general ridge estimators $\hat\beta(\Omega,K_1)$ and $\hat\beta(I,K_2)$
with 
\[ K_1 =\epsilon L_1, \quad K_2 =\epsilon L_2\]
and $\epsilon$ a positive constant satisfying 
\[ \epsilon  ( \max \{ \| (\Gamma - \Xi \Delta^{-1} \Xi^\top ) L_1 \| , \| (X^\top  X)^{-1} L_2 \| \} )< 1, \] where $\|\cdot \|$ denotes a matrix norm.
Then the quantities $\dif$ in \eqref{prc1} and $\diff$ in \eqref{prc2} are evaluated as
\begin{eqnarray}
&& \dif(\Omega,\epsilon L_1,\epsilon L_2)   \nonumber \\
&=& \sigma^2 \Xi \Delta^{-1} \Xi^\top   \nonumber \\
&& - \sigma^2 \left\{  \Xi \Delta^{-1} \Xi^\top L_2 (X^\top X)^{-1} +  (X^\top X)^{-1} L_2 \Xi \Delta^{-1} \Xi^\top  \right\} \epsilon  + O(\epsilon^2) \label{p4r1}
\end{eqnarray}
and 
\begin{eqnarray}
&& \diff(\Omega,\epsilon L_1,\epsilon L_2)  \nonumber \\
&=& \sigma^2 \Xi \Delta^{-1} \Xi^\top  \nonumber
\\ &&  - \sigma^2 \left\{ (X^\top X)^{-1}L_2 \Gamma +\Gamma L_2 (X^\top X)^{-1} - 2(\Gamma - \Xi \Delta^{-1} \Xi^\top )L_1(\Gamma - \Xi \Delta^{-1} \Xi^\top) \right\} \epsilon \nonumber \\  && + O(\epsilon^2), \label{p4r2}
\end{eqnarray}
respectively, as $\epsilon \downarrow 0$.
\end{thm}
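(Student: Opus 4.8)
The plan is to rewrite both ridge estimators as explicit first-order linear perturbations of $\tilde\beta_\ols$ and $\tilde\beta_\gme$, expand the relevant inverses by Neumann series, and then read off the coefficients of $\epsilon^0$ and $\epsilon^1$ using the classical covariance identities for the ordinary least squares and Gauss--Markov estimators.

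I would first extract the block structure implied by \eqref{Oexp}. Since $(X,Z)$ is nonsingular with $(X,Z)^{-1}=\bigl(\begin{smallmatrix}(X^\top X)^{-1}X^\top\\ (Z^\top Z)^{-1}Z^\top\end{smallmatrix}\bigr)$ (because $X^\top Z=0$), we have $\Omega=(X,Z)\,M\,(X,Z)^\top$ with $M=\begin{pmatrix}\Gamma&\Xi\\ \Xi^\top&\Delta\end{pmatrix}\in\mathcal{S}^+(n)$, and $(X,Z)^{-1}X=\bigl(\begin{smallmatrix}I_k\\ 0\end{smallmatrix}\bigr)$, so that $X^\top\Omega^{-1}X$ equals the top-left block of $M^{-1}$, namely $(\Gamma-\Xi\Delta^{-1}\Xi^\top)^{-1}$; write $\Gamma_\ast:=\Gamma-\Xi\Delta^{-1}\Xi^\top\in\mathcal{S}^+(k)$ (the Schur complement of $\Delta$ in $M$). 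Likewise $X^\top\Omega X=(X^\top X)\Gamma(X^\top X)$, whence $\cov(\tilde\beta_\ols)=\sigma^2(X^\top X)^{-1}X^\top\Omega X(X^\top X)^{-1}=\sigma^2\Gamma$ and $\cov(\tilde\beta_\gme)=\sigma^2(X^\top\Omega^{-1}X)^{-1}=\sigma^2\Gamma_\ast$; subtracting gives $\cov(\tilde\beta_\ols-\tilde\beta_\gme)=\sigma^2(\Gamma-\Gamma_\ast)=\sigma^2\Xi\Delta^{-1}\Xi^\top$, the common $\epsilon^0$ term in both \eqref{p4r1} and \eqref{p4r2}.

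Next, using $X^\top\Omega^{-1}y=(X^\top\Omega^{-1}X)\tilde\beta_\gme$ and $X^\top y=(X^\top X)\tilde\beta_\ols$, one gets the clean identities $\hat\beta(\Omega,\epsilon L_1)=(I+\epsilon\Gamma_\ast L_1)^{-1}\tilde\beta_\gme$ and $\hat\beta(I,\epsilon L_2)=(I+\epsilon(X^\top X)^{-1}L_2)^{-1}\tilde\beta_\ols$. The norm hypothesis is precisely what makes the Neumann series converge, giving $(I+\epsilon\Gamma_\ast L_1)^{-1}=I-\epsilon\Gamma_\ast L_1+O(\epsilon^2)$ and the analogue for the second factor, hence
\[
\hat\beta(I,\epsilon L_2)-\hat\beta(\Omega,\epsilon L_1)=(\tilde\beta_\ols-\tilde\beta_\gme)-\epsilon\bigl((X^\top X)^{-1}L_2\,\tilde\beta_\ols-\Gamma_\ast L_1\,\tilde\beta_\gme\bigr)+O(\epsilon^2),
\]
and similarly $\hat\beta(I,\epsilon L_2)-\beta=(\tilde\beta_\ols-\beta)-\epsilon(X^\top X)^{-1}L_2\tilde\beta_\ols+O(\epsilon^2)$ and $\hat\beta(\Omega,\epsilon L_1)-\beta=(\tilde\beta_\gme-\beta)-\epsilon\Gamma_\ast L_1\tilde\beta_\gme+O(\epsilon^2)$. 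Substituting into \eqref{prc1} and \eqref{prc2}, expanding the outer products, and collecting powers of $\epsilon$, every surviving cross-term reduces to one of the second moments ${\sf E}[(\tilde\beta_\ols-\beta)\tilde\beta_\ols^\top]=\sigma^2\Gamma$, ${\sf E}[(\tilde\beta_\gme-\beta)\tilde\beta_\gme^\top]=\sigma^2\Gamma_\ast$, ${\sf E}[(\tilde\beta_\ols-\tilde\beta_\gme)\tilde\beta_\ols^\top]=\sigma^2\Xi\Delta^{-1}\Xi^\top$, and ${\sf E}[(\tilde\beta_\ols-\tilde\beta_\gme)\tilde\beta_\gme^\top]=0$; the last equality holds because $\tilde\beta_\gme$ is the best linear unbiased estimator, so $\cov(\tilde\beta_\gme,\tilde\beta_\ols)=\cov(\tilde\beta_\gme)$ (equivalently, by the one-line computation using $X^\top\Omega^{-1}\Omega=X^\top$). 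Plugging these in and using symmetry of $\Gamma,\Gamma_\ast,(X^\top X)^{-1},L_1,L_2$ to deal with transposes yields exactly \eqref{p4r1} and \eqref{p4r2}.

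The individual computations are routine; two points deserve attention. The substantive identity is the block inversion $X^\top\Omega^{-1}X=(\Gamma-\Xi\Delta^{-1}\Xi^\top)^{-1}$, which forces the leading terms to collapse to $\sigma^2\Xi\Delta^{-1}\Xi^\top$. The second, more technical point is to check that the Neumann remainders remain $O(\epsilon^2)$ after taking expectations: this is legitimate since the omitted terms are fixed matrix-valued analytic functions of $\epsilon$ applied to $y$, whose relevant second moments are finite and bounded on a neighbourhood of $\epsilon=0$. Finally, it is worth flagging that the vanishing of ${\sf E}[(\tilde\beta_\ols-\tilde\beta_\gme)\tilde\beta_\gme^\top]$ is exactly what eliminates the $L_1$-contribution from $\dif$ while it persists in $\diff$, which accounts for the structural asymmetry between \eqref{p4r1} and \eqref{p4r2}.
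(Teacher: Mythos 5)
Your proof is correct, and it rests on the same two computational pillars as the paper's: the block identities $X^\top\Omega X = X^\top X\,\Gamma\,X^\top X$ and $X^\top\Omega^{-1}X=(\Gamma-\Xi\Delta^{-1}\Xi^\top)^{-1}$, and a first-order (Neumann) expansion of $(I+\epsilon M)^{-1}$, with the squared-bias terms discarded as $O(\epsilon^2)$. The difference is organizational but worth noting. The paper computes $\cov(\hat\beta(I,K_2)-\hat\beta(\Omega,K_1))$ by expanding the quadratic form in $\Omega$ into four blocks, Taylor-expanding each of \eqref{p41}--\eqref{p44} separately, and letting the $\Gamma_\ast L_1\Gamma_\ast$ contributions cancel by brute-force addition. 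You instead factor the ridge estimators through the unbiased ones, $\hat\beta(\Omega,\epsilon L_1)=(I+\epsilon\Gamma_\ast L_1)^{-1}\tilde\beta_\gme$ and $\hat\beta(I,\epsilon L_2)=(I+\epsilon(X^\top X)^{-1}L_2)^{-1}\tilde\beta_\ols$, and then reduce every cross-term to one of four second moments, using the BLUE orthogonality $\cov(\tilde\beta_\gme,\tilde\beta_\ols)=\cov(\tilde\beta_\gme)$ as a named fact. This buys a genuine conceptual dividend that the paper's term-by-term cancellation obscures: the vanishing of ${\sf E}[(\tilde\beta_\ols-\tilde\beta_\gme)\tilde\beta_\gme^\top]$ explains at a glance why $L_1$ is absent from the $\epsilon$-coefficient of $\dif$ in \eqref{p4r1} yet present in that of $\diff$ in \eqref{p4r2}. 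Your closing remark about controlling the remainder after taking expectations is also a point the paper passes over silently; it is harmless here since the remainders are nonrandom matrices of norm $O(\epsilon^2)$ applied to $y$, whose second moments are fixed, but it is right to flag it.
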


\begin{proof}
First we prove \eqref{p4r1}.
Clearly, $\dif(\Omega,K_1,K_2)$ is equal to
\begin{eqnarray}
&& \cov \left( \hat\beta(I,K_2) - \hat\beta(\Omega,K_1) \right) \nonumber \\ && + {\sf E} \left[ \hat\beta(I,K_2) - \hat\beta(\Omega,K_1) \right]  {\sf E} \left[ (  \hat\beta(I,K_2) - \hat\beta(\Omega,K_1) )^\top \right] . \label{p4g1}
\end{eqnarray}
As for the first term of \eqref{p4g1}, we have
\begin{eqnarray*}
&& \sigma^{-2} \cov \left( \hat\beta(I,K_2)  - \hat\beta(\Omega,K_1) \right) \\
&=&  \sigma^{-2} \cov \left(  \{ (X^\top X+K_2)^{-1} X^\top - (X^\top \Omega^{-1} X +K_1)^{-1} X^\top \Omega^{-1} \} y \right)  \\
&=& \left\{ (X^\top X+K_2)^{-1} X^\top - (X^\top \Omega^{-1} X +K_1)^{-1} X^\top \Omega^{-1}  \right\} \Omega \\
&& \quad  \left\{ (X^\top X+K_2)^{-1} X^\top - (X^\top \Omega^{-1} X +K_1)^{-1} X^\top \Omega^{-1} \right\}^\top \\
&=& (X^\top X+K_2)^{-1} X^\top \Omega X (X^\top X+K_2)^{-1} \\
&& -  (X^\top \Omega^{-1} X+K_1)^{-1} X^\top X  (X^\top X+K_2)^{-1} \\
&& -  (X^\top X+K_2)^{-1}  X^\top X  (X^\top \Omega^{-1} X+K_1)^{-1} \\
&& +  (X^\top \Omega^{-1} X+K_1)^{-1} X^\top \Omega^{-1} X  (X^\top \Omega^{-1} X+K_1)^{-1} \\
&=& \{ I+(X^\top X)^{-1}K_2 \}^{-1} (X^\top X)^{-1} X^\top \Omega X (X^\top X)^{-1} \{ I+K_2 (X^\top X)^{-1} \}^{-1}\\
&& -  \{I + (X^\top \Omega^{-1} X)^{-1} K_1\}^{-1} (X^\top \Omega^{-1} X)^{-1}  \{I + K_2 (X^\top X)^{-1}\}^{-1} \\
&& -  \{I + (X^\top X)^{-1} K_2\}^{-1} (X^\top \Omega^{-1} X)^{-1} \{I + K_1 (X^\top \Omega^{-1} X)^{-1}\}^{-1} \\
&& +  \{I + (X^\top \Omega^{-1} X)^{-1} K_1\}^{-1} (X^\top \Omega^{-1} X)^{-1}  \{I + K_1 (X^\top \Omega^{-1} X)^{-1} \}^{-1}.
\end{eqnarray*}
The four terms in the right-hand side are further calculated as
\begin{eqnarray}
&& \{I+(X^\top X)^{-1} K_2\}^{-1} \Gamma \{I+K_2 (X^\top X)^{-1}\}^{-1}, \label{p41} \\
&& \{I +(\Gamma - \Xi \Delta^{-1} \Xi^\top) K_1\}^{-1} (\Gamma - \Xi \Delta^{-1} \Xi^\top )  \{I + K_2 (X^\top X)^{-1} \}^{-1},  \label{p42} \\
&& \{ I +(X^\top X)^{-1} K_2 \}^{-1} (\Gamma - \Xi \Delta^{-1} \Xi^\top ) \{ I + K_1 (\Gamma - \Xi \Delta^{-1} \Xi^\top ) \}^{-1},  \label{p43} \\
&&  \{ I +(\Gamma - \Xi \Delta^{-1} \Xi^\top ) K_1 \}^{-1} (\Gamma - \Xi \Delta^{-1} \Xi^\top )  \{ I + K_1 (\Gamma - \Xi \Delta^{-1} \Xi^\top ) \}^{-1}, \nonumber \\ \label{p44} 
\end{eqnarray}
respectively.
As for the second term of \eqref{p4g1}, we obtain
\begin{eqnarray*}
{\sf E} \left[ \hat\beta(I,K_2) \right]
= (X^\top X+K_2)^{-1} X^\top X \beta 
= \{ I+(X^\top X)^{-1}K_2 \}^{-1} \beta
\end{eqnarray*}
and
\begin{eqnarray*}
 {\sf E} \left[ \hat\beta(\Omega,K_1) \right] 
&=& (X^\top \Omega^{-1}X+K_1)^{-1} X^\top  \Omega^{-1} X\beta \\
&=& \{ (\Gamma - \Xi \Delta^{-1} \Xi^\top )^{-1} +K_1 \}^{-1} (\Gamma - \Xi \Delta^{-1} \Xi^\top )^{-1} \beta \\
&=& \{ I +  (\Gamma - \Xi \Delta^{-1} \Xi^\top ) K_1 \}^{-1} \beta .
\end{eqnarray*}
From the definition of matrix functions, equations \eqref{p41}--\eqref{p44} are evaluated as
\begin{eqnarray*}
&& \Gamma - \{ (X^\top X)^{-1} L_2 \Gamma + \Gamma L_2 (X^\top X)^{-1} \} \epsilon + O(\epsilon^2), \\
&& \Gamma - \Xi \Delta^{-1}\Xi^\top - \{ (\Gamma - \Xi \Delta^{-1}\Xi^\top)L_1(\Gamma - \Xi \Delta^{-1}\Xi^\top)  \\ && \quad + (\Gamma - \Xi \Delta^{-1}\Xi^\top) L_2 (X^\top X)^{-1} \} \epsilon + O(\epsilon^2), \\
&& \Gamma - \Xi \Delta^{-1}\Xi^\top - \{ (X^\top X)^{-1} L_2 (\Gamma - \Xi \Delta^{-1}\Xi^\top) \\ && \quad+ (\Gamma - \Xi \Delta^{-1}\Xi^\top)L_1(\Gamma - \Xi \Delta^{-1}\Xi^\top) \} \epsilon + O(\epsilon^2), \\
&& \Gamma - \Xi \Delta^{-1}\Xi^\top - 2 (\Gamma - \Xi \Delta^{-1}\Xi^\top)L_1(\Gamma - \Xi \Delta^{-1}\Xi^\top) \epsilon + O(\epsilon^2) ,
\end{eqnarray*}
respectively.
On the other hand, from the definition of matrix functions again, it holds that
${\sf E}[ \hat\beta(I,K_2) ] =\beta +O(\epsilon)$
and
${\sf E} [ \hat\beta(\Omega,K_1) ] =\beta +O(\epsilon)$,
which implies that
\[ {\sf E} \left[ \hat\beta(I,K_2) - \hat\beta(\Omega,K_1) \right] {\sf E} \left[ ( \hat\beta(I,K_2) - \hat\beta(\Omega,K_1) )^\top \right]  =  O(\epsilon^2). \]
Thus we have
\begin{eqnarray*} 
&& {\sf E} \left[ (\hat\beta(I,K_2) - \hat\beta(\Omega,K_1)) (\hat\beta(I,K_2) - \hat\beta(\Omega,K_1))^\top \right] \\
&=& \cov \left(  \hat\beta(I,K_2) - \hat\beta(\Omega,K_1)  \right)  + O(\epsilon^2) \\
&=&\sigma^2 \Xi \Delta^{-1} \Xi^\top - \sigma^2 \left\{  \Xi \Delta^{-1} \Xi^\top L_2 (X^\top X)^{-1} +  (X^\top X)^{-1} L_2 \Xi \Delta^{-1} \Xi^\top  \right\} \epsilon 
 + O(\epsilon^2).
\end{eqnarray*}

Next we prove \eqref{p4r2}.
Clearly, $\diff(\Omega,K_1,K_2)$ is equal to
\begin{eqnarray}
&&  \cov \left( \hat\beta(I,K_2) \right) - \cov \left( \hat\beta(\Omega,K_1) \right)
+ {\sf E} \left[ \hat\beta(I,K_2) - \beta \right]  {\sf  E} \left[ ( \hat\beta(I,K_2) - \beta )^\top  \right] \nonumber \\
&& - {\sf  E} \left[ \hat\beta(\Omega,K_1) - \beta \right] { \sf E} \left[ ( \hat\beta(\Omega,K_1) - \beta )^\top \right] . \label{p4g2}
\end{eqnarray}
As for the first and second terms of \eqref{p4g2}, it holds that
\begin{eqnarray*}
\cov \left( \hat\beta(I,K_2) \right)   
&=& \sigma^2 (X^\top X+K_2)^{-1} X^\top X \Gamma X^\top X (X^\top X +K_2)^{-1} \\
&=& \sigma^2  \{I + (X^\top X)^{-1} K_2 \}^{-1} \Gamma  \{ I +  K_2 (X^\top X)^{-1} \}^{-1}
\end{eqnarray*}
and 
\begin{eqnarray*}
&& \cov \left( \hat\beta(\Omega,K_1) \right) \\
&=& \sigma^2  \{ (\Gamma - \Xi \Delta^{-1} \Xi^\top)^{-1} +K_1 \}^{-1} (\Gamma - \Xi \Delta^{-1} \Xi^\top)^{-1}  \{ (\Gamma - \Xi \Delta^{-1} \Xi^\top)^{-1} +K_1 \}^{-1} \\
&=& \sigma^2  \{ I + (\Gamma - \Xi \Delta^{-1} \Xi^\top) K_1 \}^{-1} \{ I + (\Gamma - \Xi \Delta^{-1} \Xi^\top) K_1 \}^{-1}  (\Gamma - \Xi \Delta^{-1} \Xi^\top).
\end{eqnarray*}
Moreover, recall that ${\sf E} [\hat\beta(I,K_2) ] = \{ I+(X^\top X)^{-1}K_2 \}^{-1} \beta$ and ${\sf E} [ \hat\beta(\Omega,K_1) ) = \{ I +  (\Gamma - \Xi \Delta^{-1} \Xi^\top ) K_1 \}^{-1} \beta$.
From the definition of matrix functions, it follows that
\begin{eqnarray*}
&& {\sf E} \left[ \hat\beta(I,K_2) - \beta \right] {\sf  E} \left[ ( \hat\beta(I,K_2) - \beta )^\top \right] \\ && - {\sf  E} \left[ \hat\beta(\Omega,K_1) - \beta \right] {\sf  E} \left[( \hat\beta(\Omega,K_1) - \beta )^\top \right]   =  O(\epsilon^2) , 
\end{eqnarray*}
and hence 
\[ \diff(\Omega,K_1,K_2) = \cov(\hat\beta(I,K_2)) - \cov(\hat\beta(\Omega,K_1))  + O(\epsilon^2), \]
which is equal to 
\begin{eqnarray*}
&& \sigma^2 \Xi \Delta^{-1} \Xi^\top - \sigma^2 \left\{  (X^\top X)^{-1}L_2 \Gamma + \Gamma L_2 (X^\top X)^{-1} \right. \\ && \left. - 2(\Gamma - \Xi \Delta^{-1} \Xi^\top) L_1 (\Gamma - \Xi \Delta^{-1} \Xi^\top) \right\} \epsilon + O(\epsilon^2). 
\end{eqnarray*}
This completes the proof.\qed
\end{proof}

\begin{rem}
Since 
\[ \Gamma = (X^\top X)^{-1} X^\top \Omega X (X^\top X)^{-1} \quad {\rm and} \quad \Gamma - \Xi \Delta^{-1} \Xi^\top = (X^\top \Omega^{-1} X)^{-1},\]
the quantity 
\[ (X^\top X)^{-1}L_2 \Gamma + \Gamma L_2 (X^\top X)^{-1} - 2(\Gamma - \Xi \Delta^{-1} \Xi^\top)L_1(\Gamma - \Xi \Delta^{-1} \Xi^\top)\]
can be written in original notation as
\begin{eqnarray}
&& (X^\top X)^{-1} \{ L_2  (X^\top X)^{-1} X^\top \Omega X + X^\top \Omega X  (X^\top X)^{-1} L_2 \} (X^\top X)^{-1} \nonumber \\ &&  - 2 (X^\top \Omega^{-1} X)^{-1} L_1 (X^\top \Omega^{-1} X)^{-1}. \label{TKC2} 
\end{eqnarray}
If $\Omega$ is of the form \eqref{RCS},  then  \eqref{TKC2} is simplified as 
\begin{eqnarray*}
&& \{ (X^\top X)^{-1}L_2 - (X^\top \Omega^{-1} X)^{-1} L_1 \}(X^\top \Omega^{-1} X)^{-1} \\ && + (X^\top \Omega^{-1} X)^{-1} \{ L_2 (X^\top X)^{-1} - L_1 (X^\top \Omega^{-1} X)^{-1}  \}. 
\end{eqnarray*}
In particular, when $\Omega = I$, the above quantity is further reduced to 
\[ 2  (X^\top X)^{-1} (L_2 - L_1)  (X^\top X)^{-1} . \]

If we consider estimators such that $K_1=\rho X^\top \Omega^{-1} X$ and  $K_2 =\rho X^\top X$, then the matrices
$L_1$ and $L_2$ are given by $L_1 =X^\top \Omega^{-1} X$ and $L_2 = X^\top X$,
where the constant $\rho$ is absorbed into $\epsilon$.
In this case, the quantity \eqref{TKC2} takes the form 
\[ 2\rho (X^\top X)^{-1} X^\top \Omega X (X^\top X)^{-1} -2\rho (X^\top \Omega^{-1} X)^{-1}  .\]
\end{rem}

From Theorem \ref{prop4}, when $\epsilon$ is small, the major part of the deviation of the simple estimator $\hat\beta(I,K_2)$ from the good estimator $\hat\beta(\Omega,K_1)$ is characterized by the first term 
\[\sigma^2 \Xi \Delta^{-1} \Xi^\top. \]
Moreover, when $\Xi=0$ (i.e., $\Omega$ is of the form \eqref{RCS}), the first term vanishes and the second term becomes 
\[ \sigma^2 \left\{ (X^\top X)^{-1}L_2\Gamma + \Gamma L_2 (X^\top X)^{-1} - 2 \Gamma L_1 \Gamma \right\} , \]
which is the coefficient of $\epsilon$.
If further $K_2=X^\top X \Gamma K_1$,  then $L_2=X^\top X \Gamma L_1=L_1\Gamma X^\top X$ holds and hence $ (X^\top X)^{-1}L_2\Gamma + \Gamma L_2 (X^\top X)^{-1} - 2 \Gamma L_1 \Gamma$ also vanishes, implying
$\dif(\Omega,K_1,K_2)=O(\epsilon^2)$ and $\diff(\Omega,K_1,K_2)=O(\epsilon^2)$.

From this observation, if both $\det(K_1)$ and $\det(K_2)$ are small, then the criterion proposed by \cite{RefK} still works even in the case including general ridge estimators. 
As its extension, based on \eqref{p4r2}, we propose the following two step criterion for classification of dispersion matrices:
\begin{enumerate}
\item Classify according to 
\[v_1 = \rk(\sigma^2 \Xi \Delta^{-1} \Xi^\top)=\rk(\Xi) = \rk(X^\top \Omega Z) ; \]
\item Make a finer classification by using 
\begin{eqnarray*}
v_2 &=& \rk(  (X^\top X)^{-1} K_2 \Gamma + \Gamma K_2 (X^\top X)^{-1} \\ && \quad - 2(\Gamma - \Xi \Delta^{-1} \Xi^\top) K_1 (\Gamma - \Xi \Delta^{-1} \Xi^\top) ) .
\end{eqnarray*}
\end{enumerate}

\begin{rem}
When $K_1=K_2=0$, then $v_2=0$ and our criterion reduces to that of \cite{RefK}.
\end{rem}

\begin{rem}
Let $\Omega =I$.
Then $v_1=0$ and $v_2 = \rk(K_2 - K_1)$.
As is seen in this example, even if we consider the same dispersion matrix, classification may vary according to the choice of $K_1$ and $K_2$.
Besides, when $K_1=\lambda_1 I$, $K_2=\lambda_2 I$ and $\lambda_1\neq\lambda_2 > 0$, then $v_2=k$.
\end{rem}

\begin{rem}
If $\Omega$ is of the form \eqref{RCS}, that is $\Xi = 0$, then 
\[ v_2 = \rk\left( X^\top \Omega^{-1} X(X^\top X)^{-1} K_2 + K_2 (X^\top X)^{-1}X^\top \Omega^{-1} X - 2 K_1   \right). \]
\end{rem}

\begin{rem}
When $K_1 =\rho X^\top \Omega^{-1} X$ and $K_2 =\rho X^\top X$ with $\rho > 0$, 
\[ v_2 = \rk\left( (X^\top X)^{-1} X^\top \Omega X (X^\top X)^{-1} - (X^\top \Omega^{-1} X)^{-1} \right). \]
\end{rem}

\section*{Acknowledgments}
The portion of the second author's work was supported by Japan Society for the Promotion of Science KAKENHI Grant Number JP26330035.

This is a pre-print of an article published in Statistical Papers.
The final authenticated version is available online at: https://doi.org/10.1007/s00362-017-0975-8.


\begin{thebibliography}{99}
%
\bibitem[Arnold and Stahlecker(2000)]{RefAS}
Arnold BF, Stahlecker P (2000)
Another view of the Kuks--Olman estimator.
Journal of Statistical Planning and Inference {\bf 89}, no. 1--2, 169--174. 
%
\bibitem[Baksalary and Trenkler(2009)]{RefBT}
Baksalary OM, Trenkler G (2009)
A projector oriented approach to the best linear unbiased estimator.
Statistical Papers {\bf 50}, no. 4, 721--733. 
%
\bibitem[Frank and Friedman(1993)]{RefFF}
Frank IE, Friedman JH (1993)
A statistical view of some chemometrics regression tools.
Technometrics {\bf 35}, no. 2, 109--135. 
%
\bibitem[Gro\ss(1997)]{RefG0}
Gro\ss \ J (1997)
A note on equality of MINQUE and simple estimator in the general Gauss--Markov model. 
Statistics \& Probability Letters {\bf 35}, no. 4, 335--339. 
%
\bibitem[Gross(1998)]{RefG}
Gross J  (1998)
On contractions in linear regression.
Journal of Statistical Planning and Inference {\bf 74}, no. 2, 343--351. 
%
\bibitem[Gro\ss(2004)]{RefG1}
Gro\ss \ J (2004)
The general Gauss--Markov model with possibly singular dispersion matrix.
Statistical Papers {\bf 45}, no. 3, 311--336. 
%
\bibitem[Gro\ss \ \ and Markiewicz(2004)]{RefGM}
Gro\ss \ J, Markiewicz A (2004)
Characterizations of admissible linear estimators in the linear model.
Linear Algebra and its Applications {\bf 388}, 239--248. 
%
\bibitem[Hoerl and Kennard(1970)]{RefHK}
Hoerl AE, Kennard RW (1970)
Ridge regression: biased estimation for nonorthogonal problems.
Technometrics {\bf 12}, no. 1, 55--67.
%
\bibitem[Kariya(1980)]{RefKa}
Kariya T (1980)
Note on a condition for equality of sample variances in a linear model. 
Journal of the American Statistical Association {\bf 75}, no. 371, 701--703. 
%
\bibitem[Kariya and Kurata(2004)]{RefKK}
Kariya T, Kurata H (2004)
Generalized least squares.
Wiley Series in Probability and Statistics. John Wiley \& Sons, Ltd., Chichester.
%
\bibitem[Kurata(1998)]{RefK}
Kurata H (1998)
A generalization of Rao's covariance structure with applications to several linear models.
Journal of Multivariate Analysis {\bf 67}, no. 2, 297--305. 
%
\bibitem[Markiewicz(1996)]{RefM}
Markiewicz A (1996)
Characterization of general ridge estimators.
Statistics \& Probability Letters {\bf 27}, no. 2, 145--148. 
%
\bibitem[Puntanen and Styan(1989)]{RefPS}
Puntanen S, Styan GPH (1989)
The equality of the ordinary least squares estimator and the best linear unbiased estimator (with discussions).
The American Statistician {\bf 43}, no. 3, 153--164. 
%
\bibitem[Rao(1967)]{RefR67}
Rao CR (1967)
Least squares theory using an estimated dispersion matrix and its application to measurement of signals. 
Proceedings of the Fifth Berkeley Symposium on Mathematical Statistics and Probability (Berkeley, Calif., 1965/66), Vol. I: Statistics pp. 355--372, Univ. California Press, Berkeley, Calif. 
%
\bibitem[Rao(1976)]{RefR76}
Rao CR (1976)
Estimation of parameters in a linear model. 
The Annals of Statistics {\bf 4}, no. 6, 1023--1037. 
%
\bibitem[Zyskind(1967)]{RefZ}
Zyskind G (1967)
On canonical forms, non-negative covariance matrices and best and simple least squares linear estimators in linear models. 
The Annals of Mathematical Statistics {\bf 38}, 1092--1109. 
\end{thebibliography}
\end{document}